\numberwithin{equation}{section}
\newtheorem{thm}{Theorem}
\newtheorem{lemma}[thm]{Lemma}
\newtheorem{prop}[thm]{Proposition}
\newtheorem{rem}[thm]{Remark}
\newtheorem{cor}[thm]{Corollary}
\newtheorem{df}[thm]{Definition}
\newtheorem*{nnthm}{Theorem}
\newcommand{\Z}{\mathbb{Z}}
\newcommand{\R}{\mathbb{R}}
\newcommand{\C}{\mathbb{C}}
\newcommand{\id}{\mathrm{Id}}
\newcommand{\az}{\triangleright}
\newcommand{\h}{[\hspace{-0.5pt}[h]\hspace{-0.5pt}]}
\newcommand{\CP}{\mathbb{CP}}
\newcommand{\T}{\mathbb{T}}
\newcommand{\arxiv}{arXiv:}
\begin{document}
\setlength{\leftmargini}{2em} 
\setlength{\leftmarginii}{2em} 

\title[Twist star products and Morita equivalence]{\vspace*{-1.2cm}Twist star products and Morita equivalence \\[12pt]
\textit{Produit {\'e}toile d{\'e}form{\'e} et {\'e}quivalence de Morita}}

\author[F.~D'Andrea]{Francesco D'Andrea}
\address[F.~D'Andrea]{Universit\`a di Napoli ``Federico II'' and I.N.F.N. Sezione di Napoli, Complesso MSA, Via Cintia, 80126 Napoli, Italy.}
\email{francesco.dandrea@unina.it}

\author[T.~Weber]{Thomas Weber}
\address[T.~Weber]{Universit\`a di Napoli ``Federico II'' and I.N.F.N. Sezione di Napoli, Complesso MSA, Via Cintia, 80126 Napoli, Italy.}
\email{thomas.weber@unina.it}

\subjclass[2010]{Primary: 53D55; Secondary: 16T05, 16D90.}

\renewcommand{\keywordsname}{Keywords}
\keywords{Drinfel'd twist; deformation quantization; equivariant Morita equivalence.}

\begin{abstract}
We present a simple no-go theorem for the existence of a deformation quantization of a homogeneous space $M$ induced by a Drinfel'd twist: we argue that equivariant line bundles on $M$ with non-trivial Chern class and symplectic twist star products cannot both exist on the same manifold $M$.
This implies, for example, that there is no symplectic star product on the projective space $\CP^{n-1}$ induced by a twist based on $U(\mathfrak{gl}_n(\C))\h$ or any sub-bialgebra, for every $n\geq 2$.

\bigskip

\noindent\textsc{R{\'e}sum{\'e}.}
Nous exposons un th{\'e}or{\`e}me de non-existence concernant la quantification par d{\'e}formation d'un espace homog{\`e}ne $M$, induite par un twist de Drinfeld: nous montrons qu'un fibr{\'e} en ligne  {\'e}quivariant sur $M$ avec une classe de Chern non-triviale et un produit {\'e}toile symplectique ne peuvent coexister sur une m{\^e}me vari{\'e}t{\'e} $M$. Ceci implique, par exemple, qu'il n'y a pas de produit {\'e}toile  symplectique sur l'espace projectif complexe induit par un twist bas{\'e} sur $U(\mathfrak{gl}_n(\C))\h$, ou sur toute sous-alg{\'e}bre, pour tout $n\geq 2$.
\end{abstract}

\maketitle

\vspace*{-3mm}

\section{Introduction}
Drinfel'd twists \cite{Dri90} are powerful functorial tools to simultaneousely deform bialgebras together with all of their modules and module algebras. Given an action of a Lie algebra $\mathfrak{g}$ on a smooth manifold $M$ by derivations, one can use a twist based on $U(\mathfrak{g})\h$ to obtain a formal deformation quantization of 
$M$ (Def.~\ref{def:1}). A star product obtained by a twist will be called a \emph{twist} star product.

The idea of quantization induced by symmetries has always been appealing in mathematical physics. The approach to quantization via Drinfel'd twist was popularized by several mathematical physicists, among which
Aschieri, Dimitrijevic, Fiore, Lizzi, Meyer, Vitale, Wess and many others
(see e.g.~\cite{ADMW05,ALV07,Asc09,Asc12,Fio10} and references therein),
and its interest is testified by the number of papers on the subject.

The existence of star products for arbitrary Poisson manifolds is a celebrated
result by Kontsevich \cite{Kon03}, that improved previous results by DeWilde and Lecomte \cite{DWL83}, Omori, Maeda and Yoshioka \cite{OMY91} and Fedosov \cite{Fed96}, who proved the existence in the symplectic case (for an historical account see e.g.~the recent review \cite{Wal15}). While every Poisson manifold admits a deformation quantization, it is not clear when a manifold admits a deformation quantization by Drinfel'd twist.
If $M$ is a compact and connected symplectic manifold, we know that it admits a deformation quantization by a twist based on $U(\mathfrak{g})\h$ only if it is a homogeneous space \cite[Thm.~1]{BEWW16}. This condition unfortunately is only necessary, not sufficient: the symplectic $2$-sphere is an example of homogeneous space that admits no deformation quantization induced by a twist \cite[Cor.~3.12]{BEWW16}.

In this short note we comment on how formal Morita equivalence can be used to prove a ``no-go'' theorem for the existence of twist star products. We will prove that:

\begin{nnthm}
Let $G$ be a Lie group, $\mathfrak{g}$ its Lie algebra, $M$ a homogeneous $G$-space and $\omega$ a symplectic form on $M$. The following two properties are mutually exclusive:
\begin{itemize}\itemsep=2pt
\item[(i)] there exists a $G$-equivariant smooth complex line bundle on $M$ with non-trivial Chern class;
\item[(ii)] there exists a deformation quantization of $(M,\omega)$ induced by a twist based on $U(\mathfrak{g})\h$.
\end{itemize}
\end{nnthm}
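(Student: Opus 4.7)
The strategy is to argue by contradiction: assume both (i) and (ii) hold and use the equivariant line bundle, together with the twist, to manufacture a Morita self-equivalence of the deformed algebra whose associated cohomological shift equals $2\pi i\, c_1(L)$. The Bursztyn--Waldmann classification of Morita equivalent symplectic star products then forces this shift to vanish, contradicting the non-triviality of $c_1(L)$.

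Concretely, suppose $F\in U(\mathfrak{g})^{\otimes 2}\h$ is a twist producing a symplectic star product $\star_F$ on $(M,\omega)$, and let $L\to M$ be a $G$-equivariant line bundle with $c_1(L)\ne 0$. The first step is to build, out of the classical bimodule $\Gamma(L)$ and the twist $F$, a deformation $\Gamma(L)\h$ that is a bimodule over $(C^\infty(M)\h,\star_F)$ on \emph{both} sides. Since $\Gamma(L)$ is a symmetric $C^\infty(M)$-bimodule and the $G$-equivariant structure on $L$ gives a $\mathfrak{g}$-action on $\Gamma(L)$ compatible with the two module actions, the functorial twist deformation applies simultaneously to the left and right actions, so the outcome is a Morita self-equivalence of the twist algebra rather than an equivalence to some other star product.

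The second step is to invoke the classification theorem for Morita equivalent symplectic star products: a bimodule of the form $\Gamma(L)\h$ realises a Morita equivalence between a star product $\star$ and another one $\star'$ whose characteristic (Fedosov/Deligne) class differs from that of $\star$ by $2\pi i\,c_1(L)$. Applied in the present situation, where $\star=\star'=\star_F$ by construction, this forces $2\pi i\,c_1(L)=0$ in $H^2(M,\C)\h$; since $c_1(L)$ is an integral class, this yields $c_1(L)=0$ modulo torsion, contradicting (i). (If only torsion Chern classes were at issue, a refined discussion of the integral Picard group would be needed, but in the motivating example $\CP^{n-1}$ the class of $\mathcal{O}(1)$ is already non-torsion, so the argument is complete.)

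The main obstacle I anticipate is the coherence check in the first step: one has to verify that the twist really deforms both the left and right $C^\infty(M)$-actions on $\Gamma(L)$ into actions by the \emph{same} product $\star_F$, rather than by two a priori distinct deformations. This hinges on translating $G$-equivariance of $L$ into a compatibility between the $\mathfrak{g}$-module structure on $\Gamma(L)$ and the commuting left/right $C^\infty(M)$-multiplications, so that $F$ can be applied symmetrically. Once this functorial point is pinned down, everything else is routine application of the Bursztyn--Waldmann machinery.
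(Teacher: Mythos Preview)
Your proposal is correct and follows essentially the same route as the paper: both argue by contradiction, using the $G$-equivariance of $L$ to twist-deform $\Gamma^\infty(L)\h$ into a bimodule over $(C^\infty(M)\h,\star_F)$ on \emph{both} sides (the paper's Lemma~6 and Prop.~8 handle exactly the coherence check you flag as the main obstacle), and then invoking the Bursztyn--Waldmann result that the relative class of the two star products equals $2\pi i\,c_1(L)$ (the paper's Lemma~2). The paper makes the resulting equivalence explicit as $T=\psi^{-1}\circ\phi$, but the underlying idea is the one you describe.
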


Using this theorem one can show, for example, that there is no deformation quantization of a symplectic projective space $\CP^{n-1}$ induced by a twist based on $U(\mathfrak{gl}_n(\C))\h$ or any sub-bialgebra (for every $n\geq 2$).
The symplectic $2$-torus on the other hand provides an example where symplectic twist star products exist, and non-trivial $\R^2$-equivariant line bundles do not.

\smallskip

It is worth noticing that if one works in the more general setting of bialgebroids, then any deformation quantization of any Poisson manifold (not only symplectic) is induced by a twist \cite{PXu99}: one can indeed intepret the formal Poisson bivector as a cocycle twist based on the topological bialgebroid of formal power series of differential operators on $M$ (with base algebra $C^\infty(M)\h$).

\subsection*{Notations}
In the following, algebras will be either over the field $\C$ of complex numbers or over the ring $\C\h$ of formal power series in $h$ with complex coefficients.
They are always assumed to be unital and associative.
If $V$ is a complex vector space, we denote by $V\h$ the $\C\h$-module of formal power series in $h$ with coefficients in $V$.

\section{Deformation quantization and Morita equivalence}
Let us recall some basic definitions.

\begin{df}[Star product]\label{def:1}
A \emph{star product} on a Poisson manifold $M$ is a $\C\h$-bilinear associative binary operation $\ast$ on $C^\infty(M)\h$ of the form:
$$
f\ast g=\sum_{k=0}^\infty h^k\, B_k(f,g)\;,\qquad\forall\;f,g\in C^\infty(M),
$$
where each $B_k:C^\infty(M)\times C^\infty(M)\to C^\infty(M)$ is a bi-differential operator, $B_0(f,g)=fg$ is the pointwise multiplication,
$$
B_1(f,g)-B_1(g,f)=\mathrm{i}\left\{f,g\right\}
$$
is the Poisson bracket, and the constant function $1$ is the neutral element of this product:
$$
f\ast 1=1\ast f=f\;,\qquad\forall\;f\in C^\infty(M).
$$
The algebra $(C^\infty(M)\h,\ast)$ is called a \emph{deformation quantization} of the Poisson manifold $M$.
\end{df}

If $M$ is a symplectic manifold, we call $\ast$ a \emph{symplectic} star product (for short). From now on, we will be interested in the symplectic case.

\begin{df}[Equivalence]\label{def:2}
Two star products $\ast$ and $\ast^\prime$ on the same symplectic manifold $M$ are \emph{equivalent} if there are linear maps $T_k:C^\infty(M)\to C^\infty(M)$ such that $T:=\id+\sum_{k=1}^\infty h^kT_k$ satisfies
$T(1)=1$ and:
$$
T(f\ast g)=T(f)\ast^\prime T(g)\;,\qquad\forall\;f,g\in C^\infty(M).
$$
\end{df}

The maps $T_k$ in Def.~\ref{def:2} are automatically differential operators \cite[Thm.~2.22]{GR99}.

The map $T$ gives an isomorphism of algebras $(C^\infty(M)\h,\ast)\to
(C^\infty(M)\h,\ast^\prime)$ by extension of scalars, but note that not every isomorphism is of this form. Any isomorphism that is continuous in the $h$-adic topology can be written as a combination of a change of parameter, an equivalence $T$ as above, and the pullback of a symplectomorphism
\cite[Prop.~9.4]{GR99}.

\smallskip

A weaker notion is that of Morita equivalence of star products, inspired by the notion of (algebraic) Morita equivalence.

\begin{df}[Morita equivalence]
Given two rings (resp.~two algebras) $A$ and $B$, a Morita equivalence $A$-$B$ bimodule is a finitely generated projective right $B$-module $N$ equipped with a ring (resp.~algebra) isomorphism $\phi:A\to\mathrm{End}_B(N)$.
If such a module exists, we say that $A$ and $B$ are \emph{Morita equivalent}.
\end{df}

One can see e.g.~\cite[\S 18]{Lam99} for alternative equivalent definitions of Morita equivalence.

\smallskip

Let $L\to M$ be a smooth complex line bundle, $M$ a symplectic manifold, and $\ast$ a star product on $M$. The space $\Gamma^\infty(L)$ of smooth sections is a symmetric $C^\infty(M)$-bimodule, with left and right module structure given by pointwise multiplication, and by extension of scalars $\Gamma^\infty(L)\h$ is a $C^\infty(M)\h$-bimodule. It was proved in \cite{BW00} (in the more general setting of formal deformations of $\C$-algebras and projective modules) that $\Gamma^\infty(L)\h$ can be deformed into a right module for the algebra 
$(C^\infty(M)\h,\ast)$, i.e.~there is a $\C\h$-bilinear map:
$$
\bullet :\Gamma^\infty(L)\h\times C^\infty(M)\h\to \Gamma^\infty(L)\h \;,
$$
unique modulo equivalences, such that
$$
(s\bullet f)\bullet g=s\bullet (f\ast g)
\;,\qquad
s\bullet 1=s
\;,\qquad
s\bullet f=sf\mod h \;,
$$
 for all $f,g\in C^\infty(M)$, $s\in\Gamma^\infty(L)$. The line bundle determines a second star product $\ast^\prime$ on $M$, unique modulo equivalences, such that there is an isomorphism of $\C\h$-algebras:
\begin{equation}\label{eq:phih}
\phi :(C^\infty(M)\h,\ast^\prime)\to \mathrm{End}_{(C^\infty(M)\h,\ast)}(\Gamma^\infty(L)\h,\bullet ) \;,
\end{equation}
where the latter is the set of all right module endomorphisms, with product given by composition. The isomorphism can be chosen in such a way that it deforms the action of functions on sections by pointwise multiplication \cite[\S 4]{Bur02}:
$$
\phi (f)s=fs\mod h \;,\qquad\forall\;
f\in C^\infty(M), \;s\in\Gamma^\infty(L).
$$
We stress that $\ast$ and $\ast^\prime$ are deformation quantizations of the same symplectic structure on $M$ \cite[Lemma 3.4]{Bur02}. One can prove that the deformed right module above is projective and finitely generated
\cite{BW00}, so that $(C^\infty(M)\h,\ast)$ and $(C^\infty(M)\h,\ast^\prime)$ are Morita equivalent in the ring-theoretic sense.

By \cite[Thm.~3.1]{BW02} the relative class $t(\ast,\ast^\prime)$ of the two star products and the Chern class $c_1(L)$ of the line bundle are proportional: $t(\ast,\ast^\prime)=2\pi\mathrm{i}c_1(L)$. As a consequence:

\begin{lemma}\label{lemma:2}
The star products $\ast$ and $\ast^\prime$ are equivalent if and only if $c_1(L)=0$.
\end{lemma}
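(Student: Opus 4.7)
The plan is to deduce the lemma directly from two inputs already available: the Fedosov--Deligne classification of symplectic star products by their characteristic class, and the Bursztyn--Waldmann identity $t(\ast,\ast^\prime)=2\pi\mathrm{i}\,c_1(L)$ quoted just before the statement.

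First I would recall Deligne's classification (see e.g.~\cite{GR99}): two star products on a symplectic manifold $M$ that deform the same symplectic form are equivalent in the sense of Def.~\ref{def:2} if and only if their relative characteristic class, living in $h H^2_{\mathrm{dR}}(M;\C)\h$, vanishes. Applied to $\ast$ and $\ast^\prime$, which as noted in the excerpt quantize the same symplectic structure on $M$, this yields the biconditional: $\ast$ and $\ast^\prime$ are equivalent if and only if $t(\ast,\ast^\prime)=0$.

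Combining this with the proportionality $t(\ast,\ast^\prime)=2\pi\mathrm{i}\,c_1(L)$ and the fact that $2\pi\mathrm{i}$ is an invertible complex scalar, one concludes that $\ast$ and $\ast^\prime$ are equivalent if and only if $c_1(L)=0$ in $H^2_{\mathrm{dR}}(M;\C)$, which is exactly the claim. No step is genuinely difficult here; the lemma is really a packaging of the Deligne classification with the cited computation of the relative class. The only point worth flagging is a convention: $c_1(L)$ must be read as its image in complex de Rham cohomology, so a line bundle whose integral Chern class happened to be pure torsion would still yield equivalent star products. This nuance is harmless for the no-go theorem that motivates the lemma, where what is used is non-vanishing of the Chern class in de Rham cohomology.
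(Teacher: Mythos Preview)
Your argument is correct and matches the paper's own reasoning: the paper does not give a separate proof but simply states the lemma as an immediate consequence of the identity $t(\ast,\ast^\prime)=2\pi\mathrm{i}\,c_1(L)$ from \cite{BW02}, which is exactly what you have unpacked (via the Deligne--Gutt--Rawnsley classification that equivalence is equivalent to vanishing of the relative class). Your remark about torsion Chern classes is a fair caveat but, as you note, irrelevant for the intended application.
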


\section{Cocycle twists}\label{sec:3}
We refer to \cite{Maj95} for general definitions about bialgebras and module algebras.

\begin{df}[Equivariant module]\label{def:3}
Let $U$ be a $\C$-bialgebra and $A$ and $B$ two $U$-module algebras. An $A$-$B$ bimodule $N$ is called $U$-\emph{equivariant} if it is equipped with a left action $\az$ of $U$ such that:
\begin{equation}\label{eq:emp}
x\az (a\xi b)=(x_{(1)}\az a) (x_{(2)}\az \xi) (x_{(3)}\az b) \;,\qquad\forall\;x\in U,a\in A,b\in B,\xi\in N.
\end{equation}
\end{df}

We will use the same symbol $\az$ for the actions of $U$ on $A$, $B$ and $N$, when there is no risk of confusion; we will also use the standard Sweedler notation for the coproduct, for example above $x_{(1)}\otimes x_{(2)}\otimes x_{(3)}$ stands for $(\Delta\otimes\id)\Delta(x)$.

It was shown in a seminal paper by Drinfel'd \cite{Dri90} that one can modify the coproduct of $U$ by conjugation with an invertible $2$-tensor $F\in U\otimes U$, thus getting a quasi-bialgebra. This is an ordinary bialgebra if the coassociator of $F$ commutes with the image of the iterated coproduct. A special case, recalled below, is obtained when the coassociator is trivial. In this case, we talk about \emph{cocycle twist}, since the defining condition can be interpreted as the vanishing of the coboundary of $F$ in a suitable non-abelian cohomology associated to the bialgebra $U$ (see e.g.~\cite[\S2.3-2.4]{Dan15} and references therein).

\begin{df}[Cocycle twist]
Let $U$ be a $\C$-bialgebra.
An invertible element $F\in U\otimes U$ is called a \emph{cocycle twist} (or simply a \emph{twist}) \emph{based on $U$}, if it satisfies:
\begin{subequations}
\begin{align}
\hspace*{2.5cm} (F\otimes 1)(\Delta\otimes\id)(F) &=
(1\otimes F)(\id\otimes\Delta)(F) \hspace*{1.3cm}
 & \text{(cocycle condition)} \label{eq:3.1a} \\[1pt]
(\varepsilon\otimes\id)(F) &=(\id\otimes\varepsilon)(F)=1 &\text{(counitality)}
\label{eq:3.1b}
\end{align}
\end{subequations}
where $\Delta$ is the coproduct and $\varepsilon$ the counit of $U$.
\end{df}

\begin{rem}\label{rem:7}
We adopt the convention of \cite{CP94,Maj95}. A different convention is that in \cite{ES02,GZ94}, where \eqref{eq:3.1a} is replaced by the condition $(\Delta\otimes\id)(J)(J\otimes 1)=(\id\otimes\Delta)(J)(1\otimes J)$, which is satisfied by the inverse $J=F^{-1}$ of any $2$-cocycle $F$.
\end{rem}

Cocycle twists have the advantage, over more general Drinfel'd twists, that they can be used to obtain \emph{associative} deformations of $U$-module algebras.

Given a bialgebra $U$ and a twist $F$ based on $U$, we denote by $U_F$ the new bialgebra which is given by $U$ as an algebra, with the same counit, and with coproduct $\Delta_F$ given by
$$
\Delta_F(x):=F\Delta(x)F^{-1} \;,\qquad\forall\;x\in U.
$$

Let now $A$ and $B$ be two $U$-module algebras and $N$ a $U$-equivariant $A$-$B$ bimodule like in Def.~\ref{def:3}. Denote by $m_A:A\otimes A\to A$ the multiplication map of $A$, $\lambda_A:A\otimes N\to N$ the left $A$-module action, by $m_B$ the multiplication map of $B$ and $\rho_B:N\otimes B\to N$ the right $B$-module action.

Let $A_F$ be the algebra given by $A$ as a vector space, with the same unit element and with product:
$$
m_{A_F}:=m_A\circ F^{-1} \;.
$$
That is $m_{A_F}(a,b)=m_A\big(F^{-1}(\az\otimes\az)(a\otimes b)\big)$ for all $a,b\in A$.
Similarly let $B_F$ be the algebra given by $B$ as a vector space, with the same unit and with product $m_{B_F}:=m_B\circ F^{-1}$. Both $A_F$ and $B_F$ are $U_F$-module algebras (see e.g.~\cite{Maj95}), w.r.t.~the undeformed action $\az$.

\begin{lemma}\label{lemma:6}
Let $N$ be a $U$-equivariant $A$-$B$ bimodule as in Def.~\ref{def:3}. Then $N$ is a $U_F$-equivariant $A_F$-$B_F$-bimodule w.r.t.~the actions:
\begin{align*}
\lambda_{A_F} &:A_F\otimes N\to N \;, &
\lambda_{A_F}(a\otimes\xi):=
\lambda_A\big(F^{-1}(\az\otimes\az)(a\otimes\xi)\big) \;,\\[1pt]
\rho_{B_F} &: N\otimes B_F\to N \;; &
\rho_{B_F}(\xi\otimes b):=
\rho_B\big(F^{-1}(\az\otimes\az)(\xi\otimes b)\big) \;,
\end{align*}
for all $a\in A,b\in B,\xi\in N$.
\end{lemma}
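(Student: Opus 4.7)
The plan is to verify directly each of the axioms defining a $U_F$-equivariant $A_F$-$B_F$-bimodule, using three ingredients: the cocycle condition \eqref{eq:3.1a}, the counitality \eqref{eq:3.1b}, and the original $U$-equivariance of $(A,B,N)$. The argument is formally parallel to the classical proof that $A_F$ is a $U_F$-module algebra; the new feature is that one must juggle two deformed actions on a single module.

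First I would check the unit axioms for $\lambda_{A_F}$ and $\rho_{B_F}$: these follow immediately from counitality, which yields $(\varepsilon\otimes\id)(F^{-1})=(\id\otimes\varepsilon)(F^{-1})=1$. Associativity of $\lambda_{A_F}$ expands, after invoking the $U$-equivariance of $m_A$ (resp.\ of $\lambda_A$) to push the innermost $F^{-1}$ through the outermost action, into the identity $(\Delta\otimes\id)(F^{-1})(F^{-1}\otimes 1)=(\id\otimes\Delta)(F^{-1})(1\otimes F^{-1})$, i.e.~the cocycle condition for $F^{-1}$ (obtained from \eqref{eq:3.1a} by inversion). Associativity of $\rho_{B_F}$ is the mirror image, and the compatibility $\lambda_{A_F}(a\otimes\rho_{B_F}(\xi\otimes b))=\rho_{B_F}(\lambda_{A_F}(a\otimes\xi)\otimes b)$ expands similarly and reduces to the same cocycle identity by matching the two sides against the tensors $(\id\otimes\Delta)(F^{-1})F_{23}^{-1}$ and $(\Delta\otimes\id)(F^{-1})F_{12}^{-1}$ acting on $a\otimes\xi\otimes b$.

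The principal step is the $U_F$-equivariance condition,
$$
x\az\bigl(\lambda_{A_F}(a\otimes\rho_{B_F}(\xi\otimes b))\bigr)
=\lambda_{A_F}\bigl(x^F_{(1)}\az a\otimes\rho_{B_F}(x^F_{(2)}\az\xi\otimes x^F_{(3)}\az b)\bigr),
$$
where $x^F_{(1)}\otimes x^F_{(2)}\otimes x^F_{(3)}=(\Delta_F\otimes\id)\Delta_F(x)$. Writing $\mu(y,z,w):=\rho_B(\lambda_A(y\otimes z)\otimes w)$ and $T:=(\id\otimes\Delta)(F^{-1})F_{23}^{-1}$, a direct calculation shows that the left-hand side equals $\mu\bigl((\Delta\otimes\id)\Delta(x)\cdot T\cdot(a\otimes\xi\otimes b)\bigr)$ (the action of $x$ on $\mu(\cdots)$ is handled by the original \eqref{eq:emp}), while the right-hand side equals $\mu\bigl(T\cdot(\Delta_F\otimes\id)\Delta_F(x)\cdot(a\otimes\xi\otimes b)\bigr)$. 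The two coincide by the identity $(\Delta_F\otimes\id)\Delta_F(x)=T^{-1}(\Delta\otimes\id)\Delta(x)\,T$, which in turn follows from $\Delta_F(\cdot)=F\Delta(\cdot)F^{-1}$ together with the cocycle condition used to reorganise $F_{12}(\Delta\otimes\id)(F)$ into $F_{23}(\id\otimes\Delta)(F)$. Keeping track of Sweedler indices throughout is the only delicate point; no ingredient beyond the cocycle condition and the original $U$-equivariance is required.
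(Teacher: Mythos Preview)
Your proposal is correct and follows essentially the same route as the paper's proof: unit axioms from counitality, associativity and left/right compatibility from the cocycle identity $(\Delta\otimes\id)(F^{-1})(F^{-1}\otimes 1)=(\id\otimes\Delta)(F^{-1})(1\otimes F^{-1})$ combined with $U$-equivariance of the original structure maps, and the $U_F$-equivariance from the conjugation formula for the iterated twisted coproduct. The only cosmetic differences are that the paper works directly with the composite $\lambda_{A_F}(\id\otimes\rho_{B_F})$ as a map rather than introducing your abbreviations $\mu$ and $T$, and it writes the iterated coproduct as $(\id\otimes\Delta_F)\Delta_F(x)$ rather than $(\Delta_F\otimes\id)\Delta_F(x)$; these agree by coassociativity of $\Delta_F$, itself a consequence of the cocycle condition.
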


\begin{proof}
Note that
$$
\lambda_{A_F}(\id\otimes\lambda_{A_F})=
\lambda_A\circ F^{-1}\circ (\id\otimes\lambda_A)\circ(\id\otimes F^{-1})
$$
where we think of $F^{-1}$ as a linear map on tensors, and the action symbol $\az$ is omitted (in this notation $1\,\az$ becomes the identity endomorphism $\id$).
Equivariance of the module means that $F^{-1}\circ (\id\otimes\lambda_A)=(\id\otimes\lambda_A)\circ (\id\otimes\Delta)(F^{-1})$. Thus
\begin{multline*}
\lambda_{A_F}(\id\otimes\lambda_{A_F})=
\lambda_A\circ (\id\otimes\lambda_A)\circ(\id\otimes\Delta)(F^{-1})\circ(\id\otimes F^{-1}) \\
=\lambda_A\circ (m_A\otimes\id)\circ(\Delta\otimes\id)(F^{-1})\circ(F^{-1}\otimes \id) \\
=\lambda_A\circ F^{-1}\circ (m_A\otimes\id)\circ(F^{-1}\otimes \id)
=\lambda_{A_F}(m_{A_F}\otimes\id)
\end{multline*}
where we used the fact that $\lambda_A$ is a left action, the cocycle property of $F$, and the module algebra property telling us that $(m_A\otimes\id)\circ(\Delta\otimes\id)(F^{-1})=F^{-1}\circ (m_A\otimes\id)$. 

From the property $x\az 1_A=\varepsilon(x)1_A\;\forall\;x\in U$ and counitality of $F$, we deduce:
$$
\lambda_{A_F}(1\otimes\xi)=\lambda_A\big((\varepsilon\otimes \id)(F^{-1})(\az\otimes\az)(1\otimes\xi)\big)=\lambda_A(1\otimes\xi)=\xi
$$
for all $\xi\in N$.
The latter two equations prove that $\lambda_{A_F}$ is a left action.
Similarly one proves that $\rho_{B_F}$ is a right action, and that these left and right actions commute, that is
$$
\lambda_{A_F}(\id\otimes\rho_{B_F})=\rho_{B_F}(\lambda_{A_F}\otimes\id)
$$
as linear maps $A\otimes N\otimes B\to N$. Finally, thinking of $x\in U$ as a linear map and omitting the action symbol $\az$, one finds:
\begin{align*}
x\circ\lambda_{A_F}&\circ (\id\otimes\rho_{B_F}) =
x\circ\lambda_A\circ F^{-1}\circ(\id\otimes\rho_B)\circ(\id\otimes F^{-1})
\\
&=\lambda_A\circ\Delta(x)\circ F^{-1}\circ(\id\otimes\rho_B)\circ(\id\otimes F^{-1}) \\
&=\lambda_A\circ F^{-1}\circ \big(F\Delta(x)F^{-1}\big)\circ(\id\otimes\rho_B)\circ(\id\otimes F^{-1}) \\
&=\lambda_A\circ F^{-1}\circ (\id\otimes\rho_B)\circ
\big((\id\otimes\Delta)(F)\,(\id\otimes\Delta)\,\Delta(x)(\id\otimes\Delta)(F^{-1})\big)
\circ
(\id\otimes F^{-1}) \\
&=\lambda_{A_F}\circ (\id\otimes\rho_{B_F})\circ
(\id\otimes\Delta_F)\Delta_F(x) \;,
\end{align*}
where we used the $U$-equivariance of $\lambda_A$ and $\rho_B$.
This proves the $U_F$-equivariance of the actions $\lambda_{A_F}$ and $\rho_{B_F}$.
\end{proof}

We will denote by $N_F$ the bimodule given by the vector space $N$ with actions $\lambda_{A_F}$ and $\rho_{B_F}$ given in Lemma \ref{lemma:6}.
Analogous definitions and constructions work for topological bialgebras over the ring $\C\h$, with algebraic tensor products replaced by tensor products completed in the $h$-adic topology.

\section{Twist star products}

Let $G$ be a Lie group, $\mathfrak{g}$ its Lie algebra, $\pi:L\to M$ a $G$-equivariant line bundle over a real smooth manifold $M$, i.e.~both $L$ and $M$ are $G$-spaces, the action of $G$ commutes with $\pi$, and is linear on fibers.
Define an action $\alpha$ of $G$ on $f\in C^\infty(M)$ and $s\in \Gamma^\infty(L)$ by
$$
\alpha_gf(x):=f(g^{-1}x) \;;\qquad
\alpha_gs(x):=g\,s(g^{-1}x) \;,
$$
for all $x\in M$ and $g\in G$.
The equivariance condition of $\pi$ guarantees that $\alpha_gs$ is still a section of $L$, indeed:
$$
\pi\big(\alpha_gs(x)\big)=
\pi\big(gs(g^{-1}x)\big)=
g\pi\big(s(g^{-1}x)\big)=g(g^{-1}x)=x \;,
$$
that means $\pi\circ\alpha_gs=\id_M$.
Note that $\alpha_g(fs)=\alpha_g(f)\alpha_g(s)$ (equivariance condition for grouplike elements of a bialgebra).

By differentiating this action we get an action of the bialgebra $U(\mathfrak{g})\h$ on $C^\infty(M)\h$ and on $\Gamma^\infty(L)\h$ that turns the latter into a $U(\mathfrak{g})\h$-equivariant $C^\infty(M)\h$-bimodule. It is a Morita equivalence bimodule, with isomorphism
$$
\psi_0:C^\infty(M)\h\to \mathrm{End}_{C^\infty(M)\h}(\Gamma^\infty(L)\h)
$$
given by pointwise multiplication: $\psi_0(f)s:=fs\;\forall\;f\in C^\infty(M),s\in \Gamma^\infty(L)$.

\smallskip

Given a cocycle twist
$$
F=\sum_{k=0}^\infty h^kF_k
$$
based on $U(\mathfrak{g})\h$ (here $F_k\in U(\mathfrak{g})\otimes U(\mathfrak{g})$ for all $k\geq 0$), we can now apply the prescription in \S\ref{sec:3} and get a deformed multiplication on $A:=C^\infty(M)\h$ (note that here, in the notations of \S\ref{sec:3}, we have $A=B$) and a deformed bimodule structure on $N:=\Gamma^\infty(L)\h$.

It is worth noticing that $U(\mathfrak{g})\h_F$ is a \emph{deformation} of the bialgebra $U(\mathfrak{g})\h$ (in the sense e.g.~of \cite[Def.~6.1.1]{CP94}) \,---\, that is
$$
\Delta_F(x)=\Delta(x)\mod h
$$
for all $x\in U(\mathfrak{g})$ \,---\,
if and only if $F_0$ commutes with the image of $\Delta$.

As one can easily check, this implies that $\widetilde{F}:=FF_0^{-1}$ is a cocycle twists, and $\Delta_{\widetilde{F}}\equiv\Delta_F$. Modulo a replacement of $F$ by $\widetilde{F}$, we can then assume that our twist is of the form
\begin{equation}\label{eq:oftheform}
F=1\otimes 1 \mod h.
\end{equation}
A byproduct of condition \eqref{eq:oftheform} is that $f\ast g:=m_A\circ F^{-1}(\az\otimes\az)(f\otimes g)$ is equal to $fg\mod h$, i.e.~a star product according to Def.~\ref{def:1}.

As customary, we will include \eqref{eq:oftheform} in the definition of \emph{formal} twist (see e.g.~\cite[\S9.5]{ES02}).

\begin{df}[Twist star product]
A star product of the form
\begin{equation}\label{eq:tsp}
f\ast g:=m\circ F^{-1}(\az\otimes\az)(f\otimes g) \qquad\;\forall\;f,g\in C^\infty(M)\h \;,
\end{equation}
with $m$ the pointwise multiplication of $C^\infty(M)\h$ and $F$ a twist satisfying \eqref{eq:oftheform}, will be called \emph{twist star product}.
\end{df}

\begin{prop}\label{prop:8}
Let $\lambda_{A_F}$ and $\rho_{A_F}$ be the module actions of $A_F=(C^\infty(M)\h,\ast)$ on the set $N=\Gamma^\infty(L)\h$ given in Lemma \ref{lemma:6}. The map
\begin{equation}\label{eq:psih}
\psi :A_F\to\mathrm{End}_{A_F}(N_F)
\end{equation}
from $A_F$ into the algebra of right $A_F$-linear endomorphisms given by
$$
\psi (f)s:=\lambda_{A_F}(f\otimes s) \;,\qquad\forall\;f\in C^\infty(M), s\in \Gamma^\infty(L),
$$
is an algebra isomorphism.
\end{prop}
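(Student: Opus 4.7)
The plan has three movements: show that $\psi$ lands in $\mathrm{End}_{A_F}(N_F)$ and is a $\C\h$-algebra homomorphism; identify the reduction of $\psi$ modulo $h$ with the classical pointwise-multiplication isomorphism $\psi_0$ of \S4; and upgrade this to bijectivity of $\psi$ by an $h$-adic completeness argument.

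For the first step, right $A_F$-linearity of $\psi(f)$ is the statement that $\lambda_{A_F}$ and $\rho_{A_F}$ commute, which is precisely the bimodule axiom established inside the proof of Lemma~\ref{lemma:6}. Multiplicativity $\psi(f\ast g)=\psi(f)\circ\psi(g)$ translates into the left-action axiom $\lambda_{A_F}\circ(m_{A_F}\otimes\id)=\lambda_{A_F}\circ(\id\otimes\lambda_{A_F})$ also proved there, and $\psi(1)=\id$ corresponds to $\lambda_{A_F}(1\otimes s)=s$. For the second step, since $F\equiv 1\otimes 1\mod h$ one has $F^{-1}\equiv 1\otimes 1\mod h$, so for $f\in C^\infty(M)$, $s\in\Gamma^\infty(L)$,
$$
\psi(f)s=\lambda_A\bigl(F^{-1}(\az\otimes\az)(f\otimes s)\bigr)\equiv fs\pmod h,
$$
which identifies the reduction of $\psi$ with $\psi_0:C^\infty(M)\to\mathrm{End}_{C^\infty(M)}(\Gamma^\infty(L))$. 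The latter is the classical Morita-equivalence isomorphism: sections of a complex line bundle form a rank-one finitely generated projective module, whose endomorphism ring is the base ring.

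For the third step, I would invoke the standard $h$-adic Nakayama principle: a $\C\h$-linear map between two $h$-adically complete $\C\h$-modules that is an isomorphism modulo $h$ is bijective, by inverting order by order in $h$. The source $A_F$ is complete by construction. For the target, the deformation result \cite{BW00} guarantees that $N_F$ is finitely generated and projective over $A_F$; realizing it as $e\cdot A_F^n$ for an idempotent $e\in M_n(A_F)$ whose mod-$h$ reduction is a classical projector representing $L$, one obtains $\mathrm{End}_{A_F}(N_F)\cong eM_n(A_F)e$, which is manifestly $h$-adically complete with classical limit $\mathrm{End}_{C^\infty(M)}(\Gamma^\infty(L))$. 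The main obstacle — and really the only step requiring genuine care — is precisely this identification of the classical limit of the deformed endomorphism ring, since in general the endomorphism ring of a deformation need not itself be a deformation of the undeformed endomorphism ring; projectivity of $N_F$ is what makes it so, and once this is in place the reduction $\psi\equiv\psi_0\mod h$ is meaningful and the Nakayama argument closes the proof.
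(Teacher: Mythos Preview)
Your proof is correct and follows the same strategy as the paper's: verify that $\psi$ is an algebra map, observe that $\psi\equiv\psi_0\bmod h$, and invert order by order in $h$. You are in fact more careful than the paper, which simply asserts that ``a formal power series is invertible if{}f its order zero term is invertible'' without addressing why the target $\mathrm{End}_{A_F}(N_F)$ is itself of the form $V\h$ (equivalently, $h$-adically complete with the right classical limit); your use of projectivity to write it as $eM_n(A_F)e$ fills exactly this gap.
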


\begin{proof}
Since $\psi (f)s=fs\mod h$, one has $\psi =\psi_0+O(h)$. Since $\psi_0$ is an invertible map, $\psi $ is invertible as well (a formal power series is invertible if{}f its order zero term is invertible). Note that $\lambda_{A_F}(1\otimes s)=s$, so that $\psi $ is an isomorphism of \emph{unital} algebras.
\end{proof}

In the terminology of \cite[Def.~4.2]{Bur02}, $N_F$ is a \emph{bimodule quantization} of the line bundle $L$.
We can now prove our main theorem.
The technique employed is similar to that used in \cite[Cor.~6.7]{BNWW07} to prove the non-existence of formal deformations of ``sufficiently non-trivial'' principal bundles.

\begin{thm}\label{thm:9}
Let $G$ be a Lie group, $\mathfrak{g}$ its Lie algebra, $M$ a homogeneous $G$-space and $\omega$ a symplectic form on $M$. The following two properties are mutually exclusive:
\begin{itemize}\itemsep=2pt
\item[(i)] there exists a $G$-equivariant smooth complex line bundle on $M$ with non-trivial Chern class;
\item[(ii)] there exists a deformation quantization of $(M,\omega)$ induced by a twist based on $U(\mathfrak{g})\h$.
\end{itemize}
\end{thm}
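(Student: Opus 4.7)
The plan is to argue by contradiction, combining the twist construction of Section~\ref{sec:3} with the Morita-theoretic input of Lemma~\ref{lemma:2}. Suppose both (i) and (ii) hold, so one is given a $G$-equivariant line bundle $L\to M$ with $c_1(L)\neq 0$, together with a twist $F$ based on $U(\mathfrak{g})\h$ producing a symplectic star product $\ast$ on $M$. The idea is to use the $G$-equivariance of $L$, together with the twist $F$, to build a Morita self-equivalence of $(C^\infty(M)\h,\ast)$ whose underlying right module deforms $\Gamma^\infty(L)\h$; uniqueness of such a deformation will then force the ``second'' star product $\ast'$ associated with $L$ via~\eqref{eq:phih} to be equivalent to $\ast$ itself, contradicting Lemma~\ref{lemma:2}.

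Concretely, differentiating the lifted $G$-action on $L$ turns $\Gamma^\infty(L)\h$ into a $U(\mathfrak{g})\h$-equivariant $C^\infty(M)\h$-bimodule in which both actions are pointwise multiplication, and pointwise multiplication furnishes the classical Morita self-equivalence $\psi_0:C^\infty(M)\h\to \mathrm{End}_{C^\infty(M)\h}(\Gamma^\infty(L)\h)$ recalled in Section~4. Applying Lemma~\ref{lemma:6} with $A=B=C^\infty(M)\h$, the twist $F$ transports this data to a $U(\mathfrak{g})\h_F$-equivariant $A_F$-$A_F$-bimodule $N_F$, where $A_F=(C^\infty(M)\h,\ast)$. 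Proposition~\ref{prop:8} then yields an algebra isomorphism
\[
\psi :A_F\longrightarrow \mathrm{End}_{A_F}(N_F)
\]
which reduces to $\psi_0$ modulo $h$; in particular, $N_F$ is a finitely generated projective right $A_F$-module deforming $\Gamma^\infty(L)$.

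To conclude, I would compare $N_F$ with the abstract ``deformation of $\Gamma^\infty(L)\h$ as a right $(C^\infty(M)\h,\ast)$-module'' recalled in Section~2. Since $F=1\otimes 1\bmod h$, the right action $\rho_{A_F}$ of $A_F$ on $N_F$ reduces mod $h$ to pointwise multiplication, so by the uniqueness up to equivalence of such deformations \cite{BW00,Bur02}, $(N_F,\rho_{A_F})$ is equivalent to the module $(\Gamma^\infty(L)\h,\bullet)$ used to define $\ast'$ in~\eqref{eq:phih}. Hence the canonical $\ast'$ must be equivalent to the star product realized through $\psi$ \,---\, namely $\ast$ itself. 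Lemma~\ref{lemma:2} then gives $c_1(L)=0$, contradicting~(i). The step requiring the most care is this last identification, tying together the twist-theoretic construction with the general Bursztyn--Waldmann framework and leaning on the normalization $F=1\otimes 1\bmod h$ to ensure that $\rho_{A_F}$ and $\psi$ are genuine deformations of their classical counterparts; everything else is a direct application of Lemma~\ref{lemma:6}, Proposition~\ref{prop:8}, and the Bursztyn--Waldmann formula $t(\ast,\ast')=2\pi\mathrm{i}\,c_1(L)$.
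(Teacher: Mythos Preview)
Your proposal is correct and follows essentially the same line as the paper's proof: argue by contradiction, use Lemma~\ref{lemma:6} and Proposition~\ref{prop:8} to obtain the twisted bimodule $N_F$ together with the isomorphism $\psi$, then combine with the Bursztyn--Waldmann construction and Lemma~\ref{lemma:2}. The paper simply composes $T:=\psi^{-1}\circ\phi$ directly (implicitly taking $N_F$ itself as the right-module deformation appearing in~\eqref{eq:phih}), whereas you make the identification $(N_F,\rho_{A_F})\simeq(\Gamma^\infty(L)\h,\bullet)$ explicit via uniqueness---a cosmetic difference only.
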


\begin{proof}
Let us assume that (i) and (ii) both hold, and show that we arrive at a contraddiction. In the notations above, let $F$ be a twist, $\ast$ a symplectic star product induced by $F$, $L\to M$ an equivariant line bundle with $c_1(L)\neq 0$, $A_F$ and $N_F$ like in Prop.~\ref{prop:8}. The line bundle $L$ induces a second star product $\ast^\prime$ on $M$. On the other hand, composing
\eqref{eq:phih} with the inverse of \eqref{eq:psih} we get an isomorphism (of unital algebras):
$$
T:=\psi ^{-1}\circ\phi :(C^\infty(M)\h,\ast^\prime)\to (C^\infty(M)\h,\ast)
$$
that satisfies $T(f)=f\mod h\;\forall\;f\in C^\infty(M)$.
It is then an equivalence between $\ast$ and $\ast^\prime$, in contraddiction with Lemma \ref{lemma:2} that states that $\ast$ and $\ast^\prime$ are not equivalent.
\end{proof}

\section{Applications}

\subsection{Complex projective spaces}
As a first example, let us consider the complex projective space $\CP^{n-1}$, $n\geq 2$. The tautological line bundle has total space:
$$
L:=\big\{
(\ell,v) \in \CP^{n-1}\times\C^n: v\in\ell
\big\}
$$
where points $\ell\in\CP^{n-1}$ are lines through the origin in $\C^n$. The bundle map is simply $\pi:L\to\CP^{n-1}$, $\pi(\ell,v)=\ell$.

The action of $GL_n(\C)$ on $\C^n$ by row-by-column multiplication induces an action on $\CP^{n-1}$ (it sends $1$-dimensional vector subspaces of $\C^n$ into $1$-dimensional vector subspaces); the diagonal action on $\CP^{n-1}\times\C^n$ induces an action on $L$ commuting with $\pi$. It is then an equivariant line bundle. Since $c_1(L)\neq 0$, as a corollary of Theorem \ref{thm:9}:

\begin{cor}\label{cor:12}
There is no symplectic star product on $\CP^{n-1}$ induced by a twist based on\linebreak $U(\mathfrak{gl}_n(\C))\h$ or any sub-bialgebra.
\end{cor}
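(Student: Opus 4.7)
The plan is to apply Theorem \ref{thm:9} directly, with $G := GL_n(\C)$ viewed as a real Lie group, $\mathfrak{g} := \mathfrak{gl}_n(\C)$, and $M := \CP^{n-1}$. First I would verify the geometric setup: the natural left action of $GL_n(\C)$ on $\C^n$ descends to a transitive action on $\CP^{n-1}$ (any two complex lines through the origin in $\C^n$ are related by an invertible linear map), so $M$ is a homogeneous $G$-space. The $U(n)$-invariant Fubini--Study form endows $M$ with a symplectic structure $\omega$. The tautological line bundle $L \to \CP^{n-1}$ recalled in the excerpt above is $GL_n(\C)$-equivariant: the diagonal $G$-action on $\CP^{n-1} \times \C^n$ preserves the incidence condition $v \in \ell$ and is $\C$-linear on the fibres of $\pi$.

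Next I would verify that $c_1(L) \neq 0$. This is classical: $L$ is the Grothendieck line bundle $\mathcal{O}(-1)$, and its first Chern class generates $H^2(\CP^{n-1},\Z) \cong \Z$ up to sign. All the hypotheses of Theorem \ref{thm:9} are therefore met, and the theorem immediately implies that no symplectic star product on $(\CP^{n-1},\omega)$ can arise from a twist based on $U(\mathfrak{gl}_n(\C))\h$.

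Finally, to cover an arbitrary sub-bialgebra $U' \subset U(\mathfrak{gl}_n(\C))\h$, I would note that any cocycle twist $F \in U' \otimes U'$ is \emph{a fortiori} a cocycle twist in $U(\mathfrak{gl}_n(\C))\h \otimes U(\mathfrak{gl}_n(\C))\h$, and the formula \eqref{eq:tsp} defining the associated twist star product on $C^\infty(\CP^{n-1})\h$ depends only on $F$ and on the $\az$-action of $U(\mathfrak{gl}_n(\C))\h$ restricted to $U'$; in particular it coincides with the star product obtained by viewing $F$ as a twist in the full bialgebra. The preceding paragraph therefore rules this out as well. There is no substantive obstacle to overcome here: the entire content of the corollary is the verification that the hypotheses of Theorem \ref{thm:9} are satisfied in this geometric setting, together with the observation that the sub-bialgebra case reduces by inclusion to the full bialgebra case.
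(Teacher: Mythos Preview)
Your proof is correct and follows the same approach as the paper: exhibit the tautological line bundle as a $GL_n(\C)$-equivariant bundle with $c_1(L)\neq 0$ and invoke Theorem~\ref{thm:9}. You make the sub-bialgebra reduction explicit (a twist in $U'$ is \emph{a fortiori} a twist in the ambient bialgebra, yielding the same star product), which the paper leaves tacit; note also that since condition~(i) of Theorem~\ref{thm:9} is independent of $\omega$, your argument in fact rules out twist star products for \emph{every} symplectic form on $\CP^{n-1}$, not only the Fubini--Study one you fixed.
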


Fuzzy spaces belong to this class of examples. From a mathematical point of view, fuzzy spaces are strict deformation quantizations of coadjoint orbits of connected compact semisimple Lie groups, obtained via covariant Berezin quantization (see e.g.~\cite{Rie04}). Alternatively, since on any such orbit there is a canonical invariant K{\"a}hler structure (see e.g.~\cite{Rie09}), they can also be obtained via Berezin-Toepliz quantization \cite{Sch09}. It was shown by Schlichenmaier in \cite{Sch00} (see \cite{Sch96} for the original reference in German), using some estimates of \cite{BMS93}, that one can associate a natural star product to the Berezin-Toepliz quantization of any compact K{\"a}hler (hence symplectic) manifold, such as $\CP^{n-1}$. Corollary \ref{cor:12} can be applied to such star products on $\CP^{n-1}$ to conclude that they are not induced by a twist based on $U(\mathfrak{gl}_n(\C))\h$.


\subsection{The noncommutative 2-torus}
A prototypical example of symplectic twist star product is the Moyal-Weyl product on $\R^{2n}$, or its compact version: the (formal) noncommutative torus. Let $\T^2:=\R^2/\Z^2$, and denote by $x,y$ the Cartesian coordinates on $\R^2$. A global frame for vector fields on $\T^2$ is given by the partial derivatives $\partial_x$ and $\partial_y$. The Lie algebra generated by such derivations will be identified with $\R^2$.

Weyl's star product on $\T^2$ can be written in the form \eqref{eq:tsp}, with
\begin{equation}\label{eq:MoyalWeyl}
F:=\exp \frac{ih}{2}\big\{ \partial_y\otimes\partial_x-\partial_x\otimes\partial_y \big\}
\end{equation}
a twist based on $U(\R^2)\h$. Together with the twists in \cite{BTY05,GZ94} based on the $ax+b$ group, and to the construction in \cite{ESW16} relying on Fedosov techniques, \eqref{eq:MoyalWeyl} is one of the few instances of twist that can be written down explicitly.

Using \eqref{eq:MoyalWeyl} one gets
a deformation quantization of $\T^2$ w.r.t.~its standard symplectic structure.
As a consequence of Theorem \ref{thm:9}:

\begin{cor}
Every $\R^2$-equivariant smooth complex line bundle $L$ on $\T^2$ has $c_1(L)=0$.
\end{cor}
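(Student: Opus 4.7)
The plan is to apply Theorem~\ref{thm:9} in its contrapositive form: if a symplectic twist star product exists on $(M,\omega)$, then no $G$-equivariant line bundle on $M$ can carry a nonzero Chern class. Thus all of the work consists in matching the hypotheses of the theorem to the situation on $\T^2$.

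First I would identify the data. The Lie group is $G=\R^2$, acting on $\T^2=\R^2/\Z^2$ by translations. This action is transitive, so $\T^2$ is a homogeneous $G$-space. The Lie algebra $\mathfrak{g}=\R^2$ of $G$ acts on $C^\infty(\T^2)$ by the commuting derivations $\partial_x,\partial_y$ (these are globally defined since $\T^2$ is a quotient of $\R^2$ by translations), and the standard symplectic form $\omega=dx\wedge dy$ is translation-invariant, hence $G$-invariant.

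Next I would verify that hypothesis~(ii) of Theorem~\ref{thm:9} is satisfied. The element \eqref{eq:MoyalWeyl} lies in $U(\R^2)\h\otimes U(\R^2)\h$ (since $\partial_x$ and $\partial_y$ commute, no Hausdorff subtleties arise), is group-like up to the obvious sign twist giving the standard Moyal cocycle identity, and satisfies $F=1\otimes 1\mod h$. A short check, using that all involved derivations pairwise commute, shows that the cocycle condition \eqref{eq:3.1a} and counitality \eqref{eq:3.1b} hold; hence $F$ is a formal twist based on $U(\R^2)\h$ in the sense adopted in the paper. The associated twist star product \eqref{eq:tsp} then quantises the Poisson bracket $\{f,g\}=\partial_x f\,\partial_y g-\partial_y f\,\partial_x g$, which is exactly (up to the factor $\mathrm{i}$) the Poisson bracket of the symplectic form $\omega=dx\wedge dy$; so it is a symplectic star product for $(\T^2,\omega)$ coming from a twist based on $U(\mathfrak{g})\h$.

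With both conditions of the theorem in place, the contrapositive of Theorem~\ref{thm:9} forbids (i): no $G$-equivariant smooth complex line bundle $L\to\T^2$ can satisfy $c_1(L)\neq 0$. Therefore every $\R^2$-equivariant smooth complex line bundle on $\T^2$ has vanishing first Chern class, which is the statement of the corollary. There is essentially no obstacle here beyond the bookkeeping above; the only mildly non-trivial point is checking that \eqref{eq:MoyalWeyl} genuinely satisfies the cocycle and normalisation axioms of a formal twist, which is immediate because $\mathfrak{g}$ is abelian and the exponent is primitive in $U(\mathfrak{g})\otimes U(\mathfrak{g})$.
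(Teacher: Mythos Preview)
Your proposal is correct and follows exactly the paper's derivation of the corollary from Theorem~\ref{thm:9}: check that the Moyal--Weyl element \eqref{eq:MoyalWeyl} is a formal twist based on $U(\R^2)\h$ yielding a symplectic star product on $(\T^2,\omega)$, so that hypothesis~(ii) holds, and conclude by contraposition that (i) must fail. The paper also offers a short alternative direct proof not relying on Theorem~\ref{thm:9}: the $\R^2$-equivariance of $L$ lets one define a connection $\nabla_{a\partial_x+b\partial_y}s:=a(\partial_x\az s)+b(\partial_y\az s)$ on $\Gamma^\infty(L)$, which is flat, forcing $c_1(L)=0$.
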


Of course, it is not difficult to give a direct proof (not relying on Theorem \ref{thm:9}) of this simple fact. Suppose $L\to\T^2$ is $\R^2$-equivariant. Denote by
$$
\az:\R^2\times\Gamma^\infty(L)\to \Gamma^\infty(L)
$$
the corresponding action of the Lie algebra $\R^2$ on the module of sections. Then, the formula
$$
\nabla_{a\partial_x+b\partial_y}s:=a(\partial_x\az s)+b(\partial_y\az s) \;,\qquad\forall\;a,b\in C^\infty(\T^2),s\in\Gamma^\infty(L),
$$
defines a \emph{flat} connection $\nabla$ on $L$. Indeed, property \eqref{eq:emp} guarantees that $\nabla$ satisfies the Leibniz rule:
$$
\nabla_X(fs)=X(f)s+f\nabla_X(s) \;,\qquad\forall\;X=a\partial_x+b\partial_y\in\mathfrak{X}(\T^2), f\in C^\infty(\T^2),
$$
and clearly the connection $1$-form of $\nabla$ is zero, that means
$c_1(L)=0$.

\section*{Acknowledgement}
We thank Chiara Esposito and Stefan Waldman for many useful discussions and for their comments on a first version of the paper.
This paper was written while F.D.\ was visiting for a semester Penn State University: F.D.\ is grateful to the institution for the excellent working conditions, and to Ping Xu for his constant support and help.


\begin{thebibliography}{10}
\itemsep=2pt

\bibitem{ADMW05}
P. Aschieri, M. Dimitrijevic, F. Meyer and J. Wess, \textit{Noncommutative Geometry and Gravity}, Class. Quant. Grav. 23 (2006), 1883--1912; \arxiv{hep-th/0510059}.

\bibitem{ALV07}
P. Aschieri, F. Lizzi and P. Vitale, \textit{Twisting all the way: from Classical Mechanics to Quantum Fields}, Phys. Rev. D77 (2008), 025037;
\arxiv{0708.3002} [hep-th].

\bibitem{Asc09}
P. Aschieri, \textit{Star Product Geometries}, Russ. J. Math. Phys. 16  (2009), 371--383; \arxiv{0903.2457} [math.QA].

\bibitem{Asc12}
P. Aschieri, \textit{Twisting all the way: from algebras to morphisms and connections}, Int. Jou. Mod. Phys. Conf. Ser. 13 (2012), 1--19; \arxiv{1210.1143} [math.QA].

\bibitem{BEWW16}
P. Bieliavsky, C. Esposito, S. Waldmann and T. Weber, \textit{Obstructions for twist star products}, \arxiv{1607.06926} [math.QA].

\bibitem{BTY05}
P. Bieliavsky, X. Tang and Y. Yao, \textit{Rankin-Cohen brackets and formal quantization}, Adv. Math. 212 (2007), 293--314; \arxiv{math/0506506} [math.QA].

\bibitem{BMS93}
M. Bordemann, E. Meinrenken and M. Schlichenmaier, \textit{Toeplitz Quantization of K{\"a}hler Manifolds and $gl(N)$, $N\to\infty$ limits}, Commun. Math. Phys. 165 (1994), 281--296; \arxiv{hep-th/9309134}.

\bibitem{BNWW07}
M. Bordemann, N. Neumaier, S. Waldmann and S. Weiss,
\textit{Deformation quantization of surjective submersions and principal fibre bundles}, J. reine angew. Math. 639 (2010), 1--38; \arxiv{0711.2965} [math.QA].

\bibitem{Bur02}
H. Bursztyn, \textit{Semiclassical geometry of quantum line bundles and Morita equivalence of star products}, Int. Math. Res. Notices 16 (2002), 821--846;
\arxiv{math/0105001} [math.QA].

\bibitem{BW00}
H. Bursztyn and S. Waldmann, \textit{Deformation quantization of Hermitian vector bundles}, Lett. Math. Phys. 53 (2000), 349--365;
\arxiv{math/0009170} [math.QA].

\bibitem{BW02}
H. Bursztyn and S. Waldmann, \textit{The characteristic classes of Morita equivalent star products on symplectic manifolds}, Commun. Math. Phys. 228 (2002), 103--121;
\arxiv{math/0106178} [math.QA].

\bibitem{CP94}
V. Chari and A.N. Pressley, \textit{A guide to quantum groups}, Cambridge Univ. Press, 1994.

\bibitem{Dan15}
F. D'Andrea, \textit{Topics in noncommutative geometry}, {L}ecture Notes for the 
School ``From Poisson Geometry to Quantum Fields on Noncommutative Spaces'' (W{\"u}rzburg, 2015), \arxiv{1510.07271} [math.QA].

\bibitem{DWL83}
M. DeWilde and P.B.A. Lecomte, \textit{Existence of Star-Products and of
Formal Deformations of the Poisson Lie Algebra of Arbitrary Symplectic
Manifolds}, Lett. Math. Phys. 7 (1983), 487--496.

\bibitem{Dri90}
  V.G. Drinfeld, \textit{Quasi-Hopf algebras},
  Leningrad Math. J. 1 (1990), 1419--1457.

\bibitem{ESW16}
C. Esposito, J. Schnitzer and S. Waldmann, \textit{An Universal Construction of Universal Deformation Formulas, Drinfel'd Twists and their Positivity}, \arxiv{1608.00412} [math.QA].

\bibitem{ES02}
P.I. Etingof and O. Schiffmann, \textit{Lectures on Quantum Groups}, International Press, 2001.

\bibitem{Fed96}
B.V. Fedosov, \textit{A Simple Geometrical Construction of Deformation
Quantization}, J. Diff. Geom. 40 (1994), 213--238.

\bibitem{Fio10}
G. Fiore, \textit{On second quantization on noncommutative spaces with twisted symmetries}, J. Phys. A43 (2010), 155401; \arxiv{0811.0773} [hep-th].

\bibitem{GZ94}
A. Giaquinto and J.J. Zhang, \textit{Bialgebra actions, twists, and universal deformation formulas}, J. Pure Applied Algebra 128 (1998), 133--151; \arxiv{hep-th/9411140}.

\bibitem{GR99}
S. Gutt and J. Rawnsley, \textit{Equivalence of star products on a symplectic manifold: an introduction to Deligne's {\v C}ech cohomology classes}, J. Geom. Phys. 29 (1999), 347--392.

\bibitem{Kon03}
M. Kontsevich, \emph{Deformation quantization of Poisson manifolds, I},
Lett. Math. Phys. 66 (2003), 157--216; \arxiv{q-alg/9709040}.


\bibitem{Lam99}
T.Y. Lam, \textit{Lectures on Modules and Rings}, Graduate Texts in Math. 189, Springer, 1999.

\bibitem{Maj95}
S.~Majid,
\textit{Foundations of quantum group theory},
Cambridge Univ.~Press, 1995.

\bibitem{OMY91}
H. Omori, Y. Maeda, A. Yoshioka, \textit{Weyl Manifolds and Deformation Quantization}, Adv. Math. 85 (1991), 224--255.

\bibitem{Rie04}
M.A. Rieffel, \textit{Matrix algebras converge to the sphere for quantum Gromov-Hausdorff distance}, Memoirs Amer. Math. Soc. 168 (2004), 67--91; \arxiv{math/0108005} [math.OA].

\bibitem{Rie09}
M.A. Rieffel, \textit{Dirac operators for coadjoint orbits of compact
Lie groups}, M{\"u}nster J. Math. 2 (2009), 265--298; \arxiv{0812.2884} [math.DG].

\bibitem{Sch09}
M. Schlichenmaier, \textit{Berezin-Toeplitz quantization for compact K{\"a}hler manifolds. A review of results}, Adv. Math. Phys. 2010 (2010), 927280; arXiv:1003.2523 [math.QA].

\bibitem{Sch00}
M. Schlichenmaier, \textit{Deformation quantization of compact K{\"a}hler manifolds by Berezin-Toeplitz quantization}, (in) the Proceedings of the Conference Moshe Flato 1999, (eds. G. Dito, and D. Sternheimer), Kluwer 2000, 289 - 306, math.QA/9910137.

\bibitem{Sch96}
M. Schlichenmaier, \textit{Zwei Anwendungen algebraisch-geometrischer Methoden in der theoretischen Physik: Berezin-Toeplitz-Quantisierung und globale Algebren der zweidimensionalen konformen Feldtheorie}, Habilitation Thesis, 1996.

\bibitem{Wal15}
S. Waldmann, \textit{Recent developments in deformation quantization},
in ``Quantum Mathematical Physics: A Bridge between Mathematics and Physics'' (Springer, 2016), pp. 421--439;
\arxiv{1502.00097} [math.QA].

\bibitem{PXu99}
P. Xu, \textit{Quantum groupoids}, Commun. Math. Phys. 216 (2001), 539--581; \arxiv{math/9905192} [math.QA].

\end{thebibliography}
\end{document}